\newtheorem{theorem}{Theorem}[section]
\newtheorem{lemma}[theorem]{Lemma}
\theoremstyle{definition}
\newtheorem{definition}[theorem]{Definition}
\theoremstyle{remark}
\newcommand{\ld}{\lambda}
\newcommand{\De}{\Delta}
\newcommand{\nb}{\nabla}
\newcommand{\af}{\alpha}
\newcommand{\ba}{\begin{array}}
\newcommand{\ea}{\end{array}}
\newcommand{\be}{\begin{equation}}
\newcommand{\ee}{\end{equation}}
\newcommand{\ban}{\begin{eqnarray*}}
\newcommand{\ean}{\end{eqnarray*}}
\newcommand{\C}{\mathbb{C}}
\newcommand{\R}{\mathbb{R}}
\newcommand{\Z}{\mathbb{Z}}
\newcommand{\I}{\mathcal {I}}
\newcommand{\J}{\mathcal {J}}
\numberwithin{equation}{section}
\begin{document}

\title[Deficiency  indices of difference equations ]
{The positive and negative  deficiency  indices of formally self-adjoint difference equations}

\author{GUOJING REN}

\address{School of Mathematics and Quantitative Economics,
 Shandong  University of Finance and Economics, 250014, P. R.
China}

\email{gjren@sdufe.edu.cn}

\thanks{This work is supported by the  NSF of Shandong Province, P.R. China
[grant numbers ZR2020MA012].}

\subjclass[2000]{39A70, 47B39, 34B20.}

\keywords{ Positive and negative  deficiency  indices, Hermitian operator,
Formally self-adjoint, Difference equation, $K$-real.}

\baselineskip=17pt

\begin{abstract}
This paper is concerned with formally self-adjoint difference equations
and their positive and negative  deficiency  indices.
It is shown that the order of any formally self-adjoint difference equation is even,
and some characterizations of formally self-adjoint difference equations are established.
Further, we show that the positive and negative  deficiency  indices  are always equal,
which implies  the existence of  the self-adjoint extensions of the minimal linear relations
generated by the difference equations.
This is an important and essential difference  between formally self-adjoint difference equations and their
corresponding differential equations in the spectral theory.
\end{abstract}
\maketitle

\section{Introduction}

Difference equations are usually regarded as the discretization of the corresponding differential equations.
According to the existing results, most of the  properties in spectral theory of difference equations
coincide with those of the corresponding differential equations;
only a few, but important, are different.
It has been found that the  maximal operator corresponding to a formally self-adjoint difference equation
may be multi-valued, and the corresponding minimal operator  may be non-densely defined \cite{Ren5, Shi2}.
Therefore, the classical spectral theory for symmetric operators, i.e.,
densely defined and Hermitian single-valued operators, are not available  in the studying
of the spectral properties of  difference equations in general.
Due to this reason, some researchers focus on extending the spectral theory of linear operators to
linear  non-densely defined or multi-valued operators (which are called linear relations or linear subspaces),
and many good results have been obtained
(See \cite{Coddington1,Cross,Shi3,Shi4} and their references).

According to the generalized von Neumann theory  and the GKN theory,
a Hermitian linear relation, has a self-adjoint extension
if and only if its positive and negative deficiency indices are equal,
and in this case the  domain of the self-adjoint extension has a close relationship
with the deficiency indices \cite{Coddington1,Shi3}.
So, the positive and negative deficiency indices of Hermitian linear relations play
a key role in the study of spectral theory of linear relations.

Now, we briefly recall some important results about the deficiency indices of
formally self-adjoint differential  and difference equations, respectively.

The theory of  positive and negative deficiency indices of Hermitian differential
equations has been well developed.
The canonical form
of any formally self-adjoint differential expression $\tau$
of order $m$ on some interval $\mathcal{J}$ is given by
\begin{align}
\tau y:=\sum_{j=0}^{[m/2]}(-1)^j[a_jy^{(j)}]^{(j)}+i\sum_{k=0}^{[(m-1)/2]}
\left[\left(b_ky^{(k+1)}\right)^{(k)}+\left(b_ky^{(k)}\right)^{(k+1)}\right],
\end{align}
where $a_j$ and $b_k$ are all real-valued functions belonging to $C^{\infty}(\mathcal{J})$, and $i=\sqrt{-1}$.
The following equation
\begin{align}
\tau y(x)=\ld w(x)y(x),\quad x\in \J
\end{align}
is called the  formally self-adjoint  differential equation corresponding to $\tau$ \cite[XIII,2]{Dunford&Schwartz},
where $w\ge 0$ is called the weighted function, $\ld\in \C$ is the spectral parameter.
In the case that both endpoints of $\J$ are singular,
we can divided $\J$ into two subintervals such that  each subinterval has at least one regular endpoint.
So we  take $\J:=[0, +\infty)$ without loss generality.

Since $(1.1)$ is formally self-adjoint, the minimal linear relation
generated by (1.2) in the corresponding Hilbert space is Hermitian,
and consequently its deficiency index $d_{\ld}(\tau)$  is constant when
$\ld$ is in the upper  half-plane and lower half-plane.
Denoted $d_\pm(\tau):=d_{\pm i}(\tau)$, which are called
the positive and negative deficiency indices  of  (1.2).

In addition, by $n_{\ld}(\tau)$ denote the number of linearly independent solutions of (1.2)
which satisfies
\begin{equation*}
  \int_{\J}w(x)|y(x)|^2dx<+\infty.
\end{equation*}
and denote $n_{\pm}(\tau):= n_{\pm i}(\tau)$. The definiteness condition for (1.2) is given by:
\begin{itemize}
 \item[($A_1$)] there exists a bounded interval $\J_0\subset\J$
such that for any $\ld\in\C$ and for any non-trivial solution $y$ of (1.2), the following always holds
\begin{equation*}
  \int_{\J_0}w(x)|y(x)|^2dx>0.
\end{equation*}
\end{itemize}
It is evident that $(A_1)$ holds when $w(t)> 0$ on $\J$.
Under the assumption $(A_1)$,
it has been shown that $d_{\ld}(\tau)=n_{\ld}(\tau)$ for all $\ld\in \C$.
This equivalence does not hold without the assumption $(A_1)$ \cite[Proposition 2.19]{Lesch}).

In the case that all the coefficients of $\tau$ are real-valued, i.e., all $b_k(t)\equiv 0$ on $\J$,
it has been easily shown that $n_+(\tau)=n_-(\tau)$.
This, together with $(A_1)$, implies  $ d_+(\tau)=d_-(\tau)$.
However, the values of $d_\pm(\tau)$  may differ when $\tau$ has complex-valued coefficients.
Mcleod \cite{Mcleod}  first gave an  example of a fourth-order formally self-adjoint
differential equation with  $d_+(\tau)=3$ and $d_-(\tau)=2$.
Later, Kogan and Rofe-Beketov \cite{Kogan1,Kogan2} showed that
\begin{equation*}
  |d_+(\tau)-d_-(\tau)|=1
\end{equation*}
happens for any $m\ge 3$.

Many authors  are interested in the  positive and negative
deficiency indices of formally self-adjoint difference equations,
and have got many excellent results.
A  formally self-adjoint difference expression  is necessary to be even,
and it has the following form (see Theorem 3.3):
\begin{align}
\mathcal{L}y:=\sum_{j=0}^{n}F^{j}(A_jy)+\sum_{j=1}^{n}\overline{A}_jF^{-j}y,
\end{align}
where $A_j$, $j=1,\ldots,n$, are complex-valued functions, and $A_0$ is real-valued on $\I$;
$A_n\neq 0$ on $\I$; $F$ is the forward shift operator, i.e., $Fy(t)=y(t+1)$.
The following equation
\begin{align}
(\mathcal{L}y)(t)=\ld w(t) y(t),\quad t\in \I
\end{align}
is called the formally self-adjoint equation generated by $\mathcal{L}$,
where  $w(t)\ge 0$ and $\ld\in \C$.
We take $\I:=\{t\}_{t=0}^{ +\infty}$ in the following.
It is worth noting that  the order of any formally self-adjoint difference equation is  even.
This is a difference between formally self-adjoint difference equations and  differential equations.

In addition,  $\mathcal{L}$ defined by (1.3) can be rewritten as:
\begin{align}
\mathcal{L}y=\sum_{j=0}^{n}(-1)^j\Delta^j(p_j\nb^jy)+i\sum_{k=1}^{n}[(-1)^{k+1}\Delta^k(q_ky)+q_k\nb^ky],
\end{align}
where  $\De$ and $\nb$ are the forward  and backward difference
operators, respectively, i.e., $\De y(t)=y(t+1)-y(t)$ and $\nb
y(t)=y(t)-y(t-1)$; the coefficients $p_j$ and $q_k$ are all real-valued on $\I$.
The coefficients between (1.3) and (1.5) have the following relationship:
\begin{align*}
&A_0(t)=P_0(t), \quad A_j(t)=P_j(t)+iQ_j(t),\quad 1\leq j\leq n,\\
&P_j(t)=(-1)^{j}\sum_{s=j}^{n}\sum_{k=0}^{s-j}C_{s}^{k}C_{s}^{s-j-k}p_s(t+k),    \quad 0\leq j\leq n,        \\
&Q_j(t)=(-1)^{j+1}\sum_{k=j}^{n}C_{k}^{j}q_k(t),\quad 1\leq j\leq n,
\end{align*}
where $C_{s}^{k}=\frac{s!}{k!(s-k)!}$.

The definiteness condition for (1.4) is given by:
\begin{itemize}
 \item[($A_2$)] there exists a bounded integer interval $\I_0\subset\I$
such that for any $\ld\in\C$ and for any non-trivial solution $y$ of (1.4), the following always holds
\begin{equation*}
  \sum_{t\in \I_0}w(t)|y(t)|^2>0.
\end{equation*}
\end{itemize}

Similarly as the continuous case,  the minimal linear relation generated by (1.4)
is Hermitian and consequently its deficiency index $d_{\ld}(\mathcal{L})$
is constant when $\ld$ is in the upper and lower half-planes \cite{Ren5}.
Denote $d_{\pm}(\mathcal{L}):=d_{\pm i}(\mathcal{L})$.
Under the assumption $(A_2)$, $d_{\ld}(\mathcal{L})=n_{\ld}(\mathcal{L})$ holds,
where $n_{\ld}(\mathcal{L})$ is the number of linearly independent  solutions of (1.4), which satisfies
\begin{equation*}
  \sum_{t\in \I}w(t)|y(t)|^2<+\infty.
\end{equation*}
Furthermore,  $n\le d_+(\mathcal{L})=d_-(\mathcal{L})\le 2n$ holds and all the values in this range can be realized.
The readers are referred to \cite{Atkinson1,Chen1, Clark1, Jirari1,Ren1,Ren2, Sun1} for more details.

Up to now, all the existing results on the the positive and negative deficiency indices of
formally self-adjoint difference equations  coincide with those of their corresponding differential equations.
So, one may think there would exist  some special case of (1.4),
corresponding to Mcleod's example mentioned above, satisfying  $d_+(\mathcal{L})\neq d_-(\mathcal{L})$.
In this manuscript, we show that  the positive and negative deficiency indices of
any formally self-adjoint difference equations are equal (Theorem 4.1).
This is a new and  important difference in the spectral theory between
the formally self-adjoint difference equations  and their corresponding differential equations.

The rest of the paper is organized as follows.
In Section 2, we introduce some basic concepts  of the spectral theory in Hilbert space and give
some  sufficient and necessary conditions for an Hermitian operator to have equal  positive and negative deficiency indices.
In Section 3, we establish two characterizations of formally self-adjoint difference expressions.
In Section 4, we prove  the positive and negative deficiency indices of
any formally self-adjoint difference equations are equal (Theorem 4.1).

\section{Fundamental results about linear relations and linear operators}

In this section, we introduce some basic concepts of linear relations and linear operators,
and show  some results on the deficiency indices of Hermitian  linear relations and
linear operators and their self-adjoint extensions.


By $\C$, $\R$, and $\Z$ denote the sets of all complex numbers, real numbers and integers, separately.
By $\bar{a}$ denote the conjugation of  $a\in \C$, and  $i:=\sqrt{-1}$.

Let $X$  be a complex Hilbert  space  with inner product $\langle\cdot, \cdot\rangle$.
Let $T_1$ and $T_2$ be  two linear relations (briefly, relation) in $X^2:=X\times X$.
Denote
\begin{eqnarray*}
&&\mathcal{D}(T):=\{x\in X:\; (x,f)\in T {\;\rm for \; some}\; f\in X\},\\
&&\mathcal{R}(T):= \{f\in X: \;(x,f)\in T {\;\rm for \; some}\; x\in X\},\\
&&\mathcal{N}(T):=\{x\in X:\;(x,0)\in T\},\\
&&T^*:=\{(x,f)\in X^2: \;\langle x,g\rangle=\langle f,y\rangle\; {\rm  for\; all}\; (y,g)\in T\},\\
&&T(x):=\{ f\in X:\; (x, f)\in T\},\\
&&T_1+T_2:=\{(x,f_1+f_2)\in X^2: \;(x,f_i)\in T_i, i=1,2\}.
\end{eqnarray*}
It is clear that $T(0)=\{0\}$ if and only if there exists a unique linear  operator, denoted by
$S_T:\mathcal{D}(T)\to X$, such that its graph $G(S_T)=T$. In this case, $S_Tx=f$ for any $(x,f)\in T$.

In addition, $T$ is said to be Hermitian  if $T\subset T^*$;
$T$ is said to be symmetric   if $T\subset T^*$ and $\mathcal{D}(T)$ is dense in $X$;
$T$ is said to be self-adjoint  if $T=T^*$.


If  $T$ is a closed linear relation in $X^2$, then $T$ can be decomposed as   \cite{Arens}:
\begin{eqnarray*}
T=T_{s}\oplus T_{\infty},
\end{eqnarray*}
where $T_{\infty}=\{(0,f)\in X^2:\; f\in T(0)\}$ is called the pure multi-valued parts of $T$, and
\begin{eqnarray*}
T_{s}=T\ominus T_{\infty}
\end{eqnarray*}
is called the operator part of $T$.

\begin{lemma}  \cite[Proposition 2.1, 3.1]{Shi4}
Let $T$ be a closed Hermitian relation in  $X^2$. Then \begin{align*}
T_{\infty}=T\cap (T(0))^2,\quad T_{s}=T\cap (T(0)^\bot)^2,
\end{align*}
and  $T_s$ is  closed Hermitian  in $(T(0)^\bot)^2$.
Further, if $T$ is self-adjoint  in $X^2$, then $T_s$ is
 self-adjoint   in $(T(0)^\bot)^2$.
\end{lemma}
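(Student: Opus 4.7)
The plan is to prove the four assertions in turn, relying on a single key observation that I would establish at the outset: for any closed Hermitian relation $T$ and any $(x,y)\in T$, the first component $x$ automatically lies in $T(0)^\bot$. To see this, decompose $x=x_s+x_\infty$ with $x_\infty\in T(0)$ and $x_s\in T(0)^\bot$. Since $x_\infty\in T(0)$, we have $(0,x_\infty)\in T$, and the Hermitian identity applied to the pairs $(x,y)$ and $(0,x_\infty)$ yields $\langle y,0\rangle=\langle x,x_\infty\rangle=\|x_\infty\|^2$, forcing $x_\infty=0$. In particular $\mathcal{D}(T)\subset T(0)^\bot$; this is the engine for everything that follows.

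Using this, the identification $T_\infty=T\cap (T(0))^2$ is immediate: the inclusion $\subset$ follows because $0\in T(0)$, and conversely if $(x,y)\in T$ with $x\in T(0)$ then the observation forces $x\in T(0)\cap T(0)^\bot=\{0\}$, giving $(x,y)=(0,y)$ with $y\in T(0)$. For the identification $T_s=T\cap (T(0)^\bot)^2$, one inclusion uses $T_s\perp T_\infty$ in $X^2$: for $(x,y)\in T_s$ and any $f\in T(0)$ one computes $\langle (x,y),(0,f)\rangle_{X^2}=\langle y,f\rangle=0$, so $y\in T(0)^\bot$, while $x\in T(0)^\bot$ comes from the key observation. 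The reverse inclusion follows by the same orthogonality computation, showing that any $(x,y)\in T$ with $y\in T(0)^\bot$ is orthogonal to $T_\infty$ and hence lies in $T_s$. Closedness of $T_s$ is then automatic as the intersection of two closed sets, and the Hermitian property is inherited from $T$ since the inner product on $T(0)^\bot$ is the restriction of the one on $X$.

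The one delicate step is the final self-adjointness claim. Assume $T=T^*$ and let $(u,v)\in T_s^*$, where the adjoint is now computed inside the smaller Hilbert space $(T(0)^\bot)^2$, so $u,v\in T(0)^\bot$ and $\langle v,x\rangle=\langle u,y\rangle$ for every $(x,y)\in T_s$. To conclude $(u,v)\in T_s$, I would first promote this to $(u,v)\in T^{*}=T$ by verifying $\langle v,x'\rangle=\langle u,y'\rangle$ for every $(x',y')\in T$. Given such a pair, decompose $y'=y'_s+y'_\infty$ with $y'_\infty\in T(0)$ and $y'_s\in T(0)^\bot$; then $(x',y'_s)=(x',y')-(0,y'_\infty)\in T$, and in fact $(x',y'_s)\in T_s$ since its second component lies in $T(0)^\bot$ and its first component does so automatically. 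Therefore $\langle v,x'\rangle=\langle u,y'_s\rangle$, and adding the vanishing term $\langle u,y'_\infty\rangle=0$ (since $u\perp T(0)$) recovers $\langle u,y'\rangle$. This places $(u,v)$ in $T$, and since additionally $v\in T(0)^\bot$, in fact $(u,v)\in T_s$.

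The main obstacle I anticipate is the bookkeeping for the ambient space of the adjoint in the self-adjointness argument: $T_s^{*}$ is an adjoint inside $(T(0)^\bot)^2$, not inside $X^2$, so one must translate carefully between the two without losing information. The observation $\mathcal{D}(T)\subset T(0)^\bot$ for any closed Hermitian $T$, which seems innocuous, is precisely what makes the decomposition $y'=y'_s+y'_\infty$ yield a pair $(x',y'_s)$ still lying in $T_s$, and thus makes the translation work.
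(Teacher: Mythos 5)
Your proof is correct and complete. Note that the paper itself gives no proof of this lemma --- it is quoted verbatim from \cite[Propositions 2.1, 3.1]{Shi4} --- so there is no internal argument to compare against; your write-up supplies a valid self-contained justification. All the essential points check out: the identity $\mathcal{D}(T)\subset T(0)^\bot$ (which, incidentally, follows even more directly by applying the Hermitian identity to $(x,y)\in T$ and $(0,f)\in T$ for an arbitrary $f\in T(0)$, giving $\langle x,f\rangle=\langle y,0\rangle=0$ without any decomposition of $x$), the two set identities, closedness and Hermiticity of $T_s$ in $(T(0)^\bot)^2$, and the promotion of $(u,v)\in T_s^{*}$ (adjoint taken in the smaller space) to an element of $T^{*}=T$ via the splitting $y'=y'_s+y'_\infty$. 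You are also right that the only delicate bookkeeping is the ambient space in which $T_s^{*}$ is computed, and your handling of it is sound.
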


Let $T$ be  a linear relation (operator)  in $X^2$ ($X$).
The subspace $\mathcal{R}(T-\ld I))^\bot$ and the number
$d_{\ld}(T) := \dim \mathcal{R}(T-\ld I)^\bot$ are called the deficiency
space and deficiency index of $T$ and $\ld$, separately.
By $\overline{T}$ denote the closure of $T$.
It can be easily verified that $d_{\ld}(T)=d_{\ld}(\overline{T})$ for all $\ld\in \C$.

Let $T$ be  a linear relation  in $X^2$. The set
\begin{equation*}
  \Gamma(T):= \{\ld\in \C: \exists\, c(\ld) > 0 \;{\rm s.t.}\; \|f-\ld x\|\ge c(\ld)\|x\|,
  \forall (x,f)\in T\}
\end{equation*}
is called the regularity domain of $T$.

It has been shown by  \cite[Theorem 2.3]{Shi3} that the deficiency index $d_{\ld}(T)$
is constant in each connected subset of $\Gamma(T)$.
If $T$ is Hermitian, then $d_{\ld}(T)$ is constant in the upper and lower half-planes.
So, we can denote $d_{\pm}(T) := d_{\pm i}(T)$ for an  Hermitian linear relation $T$,
and call $d_{\pm}(T)$ the positive and negative deficiency indices of  $T$, respectively.

\begin{lemma} \cite[Theorem 15]{Coddington1}
Let $T$ be a closed Hermitian relation in  $X^2$. Then $T$ has self-adjoint  extensions if and only if
$d_+(T)=d_-(T)$.
\end{lemma}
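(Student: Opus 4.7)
The plan is to adapt von Neumann's classical extension theorem to the relation setting via the Cayley transform. For a closed Hermitian relation $T$ in $X^2$, define
\begin{equation*}
V:=\{(f+ix,\,f-ix):(x,f)\in T\}.
\end{equation*}
The Hermitian property $\langle f,y\rangle=\langle x,g\rangle$ for all $(x,f),(y,g)\in T$ implies $\|f+ix\|^2=\|f-ix\|^2$ by a direct expansion, so $V$ is a well-defined (single-valued) closed isometry with $\mathcal{D}(V)=\mathcal{R}(T+iI)$ and $\mathcal{R}(V)=\mathcal{R}(T-iI)$. In particular
\begin{equation*}
\dim\mathcal{D}(V)^{\perp}=d_-(T),\qquad \dim\mathcal{R}(V)^{\perp}=d_+(T).
\end{equation*}

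Next I would verify that the Cayley transform is a bijection between closed Hermitian extensions of $T$ and closed isometric extensions of $V$, and that under this bijection self-adjoint relations correspond precisely to \emph{unitary} relations in $X$ (i.e.\ isometries with $\mathcal{D}=\mathcal{R}=X$). Assuming this correspondence, the proof reduces to the purely geometric fact that a closed isometric relation $V$ admits a unitary extension in $X$ iff $\dim\mathcal{D}(V)^{\perp}=\dim\mathcal{R}(V)^{\perp}$. One direction is immediate: any unitary extension $U$ maps $\mathcal{D}(V)^{\perp}$ isometrically onto $\mathcal{R}(V)^{\perp}$. Conversely, if these dimensions agree, choose any surjective isometry $W:\mathcal{D}(V)^{\perp}\to\mathcal{R}(V)^{\perp}$ and take $V\cup W$ (graph union); its inverse Cayley transform is a self-adjoint extension of $T$. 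Combining the two steps yields the equivalence $d_+(T)=d_-(T)\iff T$ has a self-adjoint extension.

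The main obstacle is handling the possibly multi-valued part $T(0)\neq\{0\}$, where the classical operator arguments need to be re-examined. The cleanest route, made available by Lemma 2.1, is to pass to the Arens decomposition $T=T_s\oplus T_\infty$. Since $T_\infty=\{0\}\times T(0)$ contributes trivially to $\mathcal{R}(T\pm iI)$ beyond adding $T(0)$, one checks $d_\pm(T)=d_\pm(T_s)$; moreover self-adjoint extensions of $T$ within $X^2$ correspond to self-adjoint extensions of the Hermitian \emph{operator} $T_s$ inside $(T(0)^\perp)^2$, with the fixed multi-valued block $T_\infty$ reattached. One can then invoke the Cayley-transform argument above in the operator setting, where single-valuedness of $V$ and all adjoints is automatic, and transport the conclusion back to $T$.
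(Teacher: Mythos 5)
The paper does not prove this lemma at all; it is quoted verbatim from Coddington \cite[Theorem 15]{Coddington1}, so there is no in-paper argument to compare against. Your proposal reconstructs what is essentially the standard proof of that cited result: the Cayley transform $V=\{(f+ix,f-ix):(x,f)\in T\}$ of a closed Hermitian relation is a well-defined closed isometry between the closed subspaces $\mathcal{R}(T+iI)$ and $\mathcal{R}(T-iI)$ (the identity $\|f\pm ix\|^2=\|f\|^2+\|x\|^2$ gives both single-valuedness and closedness of these ranges), self-adjoint relations correspond to unitary operators on all of $X$, and a closed isometry between closed subspaces extends to a unitary iff the two orthocomplements have equal dimension. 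All the key identifications ($\dim\mathcal{D}(V)^{\perp}=d_-(T)$, $\dim\mathcal{R}(V)^{\perp}=d_+(T)$) are stated correctly, and the steps you defer (the bijection between Hermitian extensions of $T$ and isometric extensions of $V$, and the unitary/self-adjoint correspondence) are genuine but routine verifications in the relation setting. So the core argument is sound and is the same von Neumann--Cayley route used in the literature.

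One caveat about your final paragraph: the reduction to the Arens decomposition does not buy what you claim. Even after passing to the operator part $T_s$ in $(T(0)^{\perp})^2$ (which is Lemma 2.3 of the paper, not Lemma 2.1), $T_s$ need not be \emph{densely} defined in $T(0)^{\perp}$, so its adjoint and its self-adjoint extensions there may still be multivalued relations; the claim that ``single-valuedness of all adjoints is automatic'' in that setting is false in general. Fortunately this detour is unnecessary: your relation-level Cayley transform in the first two paragraphs already handles the multivalued part of $T$ directly (the fixed points of $V$ encode $T(0)$), so you should simply drop the third paragraph rather than rely on it.
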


\begin{lemma} \cite[Corollary 2.1]{Shi4} Let $T$ be a closed Hermitian relation in  $X^2$.
Then $d_{\pm}(T)=d_{\pm}(T_s)$, and consequently,
 $T$ has a self-adjoint  extension in
$X^2$ if and only if $T_s$ has a self-adjoint  extension in $(T(0)^\bot)^2$ .
\end{lemma}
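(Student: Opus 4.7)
The plan is to exploit the orthogonal decomposition $T = T_s \oplus T_\infty$ from Lemma~2.1 together with a basic but crucial observation: since $T$ is Hermitian, for any $(x,f)\in T$ and any $g\in T(0)$ (so that $(0,g)\in T$) we have $\langle x,g\rangle = \langle f,0\rangle = 0$, hence $\mathcal{D}(T)\subset T(0)^{\bot}$. This means that every domain element of $T$ already lives in the ``operator side'' of the decomposition, which is what makes the range splitting below clean.

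Next, I would establish, for every $\ld\in\C$, the orthogonal decomposition
\begin{equation*}
  \mathcal{R}(T-\ld I) = \mathcal{R}(T_s - \ld I_s)\,\oplus\, T(0),
\end{equation*}
where $I_s$ denotes the identity on $T(0)^{\bot}$. The inclusion $\supset$ is immediate from $T_s\subset T$ and $T_\infty\subset T$. For $\subset$, take $(x,f)\in T$, write $f = f_s + f_\infty$ with $f_s\in T(0)^{\bot}$ and $f_\infty\in T(0)$; then $(0,f_\infty)\in T_\infty\subset T$, so $(x,f_s)\in T$, and since $x,f_s\in T(0)^{\bot}$ we get $(x,f_s)\in T_s$. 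Therefore
\begin{equation*}
  f-\ld x = (f_s-\ld x) + f_\infty,
\end{equation*}
which is exactly the claimed decomposition (note $f_s-\ld x\in T(0)^{\bot}$ and $f_\infty\in T(0)$, so the two summands are orthogonal). Taking orthogonal complements in $X = T(0)^{\bot}\oplus T(0)$ and using that $\mathcal{R}(T_s-\ld I_s)\subset T(0)^{\bot}$, I obtain
\begin{equation*}
  \mathcal{R}(T-\ld I)^{\bot} = T(0)^{\bot}\ominus \mathcal{R}(T_s-\ld I_s),
\end{equation*}
where the right-hand side is the deficiency space of $T_s$ computed in the Hilbert space $(T(0)^{\bot})$. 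Setting $\ld=\pm i$ gives $d_{\pm}(T) = d_{\pm}(T_s)$.

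Finally, the ``consequently'' clause is a formal consequence: by Lemma~2.1 the relation $T_s$ is closed Hermitian in $(T(0)^{\bot})^2$, so Lemma~2.2 applies to both $T$ (in $X^2$) and $T_s$ (in $(T(0)^{\bot})^2$); combined with the equality of deficiency indices just proved, this yields
\begin{equation*}
  d_+(T)=d_-(T)\iff d_+(T_s)=d_-(T_s),
\end{equation*}
which is exactly the stated equivalence of the existence of self-adjoint extensions. The only delicate point is the first step (confirming $\mathcal{D}(T)\subset T(0)^{\bot}$ and the orthogonality of the two summands), since everything else is bookkeeping; once that is in place, the rest is purely formal.
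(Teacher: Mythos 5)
Your argument is correct and complete: the observation that Hermitian-ness forces $\mathcal{D}(T)\subset T(0)^{\bot}$ is exactly the point that makes the range decomposition $\mathcal{R}(T-\ld I)=\mathcal{R}(T_s-\ld I_s)\oplus T(0)$ work, and the rest (passing to orthogonal complements and invoking Lemma 2.2 for both $T$ and $T_s$, the latter being closed Hermitian in $(T(0)^\bot)^2$ by Lemma 2.1) is sound. The paper itself gives no proof here --- it simply cites \cite[Corollary 2.1]{Shi4} --- and your derivation is the standard one underlying that reference, so there is nothing to flag.
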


A  relation $T$  is said to be bounded from below (above) if there exists a number $c\in \R$ such that
\begin{equation*}
  \langle x,f\rangle\ge c\|x\|^2, \quad  (\langle x,f\rangle\le c\|x\|^2), \quad  \forall (x,f)\in T,
\end{equation*}
while such a constant $c$ is called a lower (upper) bound of $T$.

\begin{lemma}\cite[Proposition 1.4.6]{Behrndt}
Let $T$ be an Hermitian relation and be bounded from below with lower bound $c$. Then
$\C\setminus [c,+\infty) \subset \Gamma(T)$, and the deficiency index $d_{\ld}(T)$ is constant for all
$\ld\in \C\setminus [c,+\infty)$.
\end{lemma}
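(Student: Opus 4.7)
The plan is to verify $\C\setminus[c,+\infty)\subset\Ga(T)$ directly from the definition by treating non-real $\ld$ and real $\ld<c$ in two separate cases, and then to invoke the constancy of $d_\ld(T)$ on each connected component of $\Ga(T)$ (quoted earlier from \cite[Theorem 2.3]{Shi3}) together with the connectedness of $\C\setminus[c,+\infty)$.

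The essential preliminary I would establish is the reality $\langle x,f\rangle\in\R$ for every $(x,f)\in T$. This uses nothing but $T\subset T^*$: the pair $(x,f)$ itself lies in $T^*$, so applying the defining identity of $T^*$ with $(y,g)=(x,f)$ gives $\langle x,f\rangle=\langle f,x\rangle=\overline{\langle x,f\rangle}$. Crucially, no single-valuedness of $T$ is needed, which is appropriate since $T$ is a relation.

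With reality in hand, for $\ld=\af+i\bt$ with $\bt\ne 0$ direct expansion using $\langle x,f\rangle\in\R$ to handle the cross term yields
\begin{equation*}
\|f-\ld x\|^2=\|f-\af x\|^2+\bt^2\|x\|^2\ge \bt^2\|x\|^2,
\end{equation*}
so $\ld\in\Ga(T)$ with $c(\ld)=|\bt|$. For real $\ld<c$, the lower-bound hypothesis gives
\begin{equation*}
\langle x,f-\ld x\rangle=\langle x,f\rangle-\ld\|x\|^2\ge(c-\ld)\|x\|^2,
\end{equation*}
and Cauchy--Schwarz then forces $\|f-\ld x\|\ge(c-\ld)\|x\|$ (trivial for $x=0$), so $\ld\in\Ga(T)$ again.

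Combining the two cases yields $\C\setminus[c,+\infty)\subset\Ga(T)$; since this set is the complement of a closed half-line in $\C$ and hence connected, \cite[Theorem 2.3]{Shi3} immediately delivers the constancy of $d_\ld(T)$ on it. The only spot that might need care is the reality of $\langle x,f\rangle$ in the multi-valued setting, but as noted above a single application of the Hermitian inclusion to $(x,f)$ itself resolves it, and everything else is a direct Hilbert-space computation.
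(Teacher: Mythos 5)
Your argument is correct and complete. Note that the paper offers no proof of this lemma at all—it is quoted verbatim from \cite[Proposition 1.4.6]{Behrndt}—so there is no in-paper argument to compare against; what you give is the standard proof, and it is carried out properly for the relation (possibly multi-valued, non-densely defined) setting. The two points that genuinely need care are both handled: the reality of $\langle x,f\rangle$ is derived purely from the inclusion $T\subset T^*$ applied to the pair $(x,f)$ itself, with no appeal to single-valuedness, and the estimate $\|f-\ld x\|\ge(c-\ld)\|x\|$ for real $\ld<c$ correctly uses the lower bound together with Cauchy--Schwarz. Combined with the connectedness of $\C\setminus[c,+\infty)$ and the constancy of $d_{\ld}(T)$ on connected subsets of $\Gamma(T)$ (Lemma 2.3 of the paper, quoted from \cite[Theorem 2.3]{Shi3}), this yields the full statement.
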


Combining Lemmas 2.2 and 2.4, one can get the following result.

\begin{lemma}
Let $T$ be an Hermitian relation and  be  bounded from below. Then
$d_+(T)=d_-(T)$ and $T$ has self-adjoint extensions.
\end{lemma}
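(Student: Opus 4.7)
The statement is essentially a direct combination of Lemmas 2.2 and 2.4, so my plan is to execute that combination cleanly and to address one small detail about closedness. First I would fix a lower bound $c \in \R$ for $T$ furnished by the hypothesis, so that $\langle x,f\rangle \ge c\|x\|^2$ for every $(x,f)\in T$. Lemma 2.4 then gives $\C\setminus[c,+\infty)\subset \Gamma(T)$ and, crucially, that $d_\ld(T)$ takes a single constant value throughout $\C\setminus[c,+\infty)$.

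Next I would observe that both $\ld=i$ and $\ld=-i$ belong to $\C\setminus[c,+\infty)$, since they have nonzero imaginary part and hence cannot lie on the real ray $[c,+\infty)$ regardless of the value of $c$. Therefore $d_+(T) = d_i(T) = d_{-i}(T) = d_-(T)$, which establishes the first assertion. No separate connectedness argument needs to be supplied here because Lemma 2.4 has already packaged that step.

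For the existence of self-adjoint extensions, Lemma 2.2 requires a closed Hermitian relation, so I would pass to the closure $\overline{T}$, which is still Hermitian. Using the identity $d_\ld(T)=d_\ld(\overline{T})$ recorded in the paragraph just before Lemma 2.4, the equality just proved yields $d_+(\overline{T}) = d_-(\overline{T})$, so Lemma 2.2 supplies a self-adjoint relation $S$ with $\overline{T}\subset S$. Since $T\subset \overline{T}\subset S$, this $S$ is a self-adjoint extension of $T$ as well.

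The main obstacle is essentially absent: the only thing to watch is the transition to $\overline{T}$, which is needed to make Lemma 2.2 applicable but is completely standard. If one wanted a more self-contained argument, one could instead verify directly that equality of deficiency indices suffices for a Hermitian (not necessarily closed) relation to admit self-adjoint extensions; but routing through the closure is the shortest path given the lemmas already in place.
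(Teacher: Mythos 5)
Your proposal is correct and follows exactly the route the paper intends: the paper gives no written proof, stating only that the lemma follows by ``combining Lemmas 2.2 and 2.4,'' which is precisely what you carry out. Your extra care in passing to $\overline{T}$ so that Lemma 2.2 (stated for closed Hermitian relations) applies is a detail the paper leaves implicit, and you handle it correctly via $d_\ld(T)=d_\ld(\overline{T})$.
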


Next, we pay attention to  the self-adjointness of  an operator by using the concept of $K$-real.
Let $S$ be an operator on $X$.

\begin{definition}\cite[Section 8.1]{Weidmann1}
Let $X$ be a complex Hilbert space. A mapping $K$ of $X$ onto itself is called a conjugation if
\begin{enumerate}
  \item  $K(ax+by)=\bar{a}K(x)+\bar{b}K(y)$ for all $x,y\in X$, $a,b\in\C$.
  \item $K^2=I$.
  \item $\langle Kx,Ky\rangle=\langle y,x\rangle$ for all $x,y\in X$.
\end{enumerate}
An  operator $S$ on $X$ is said to be a $K$-real if
\begin{enumerate}
  \item  $K\mathcal{D}(S)\subset \mathcal{D}(S)$
  \item $SKx=KSx$ for $x\in \mathcal{D}(S)$.
 \end{enumerate}
\end{definition}

A sufficient condition for a symmetric  operator
to have equal positive and negative deficiency indices is given by \cite[Theorem 8.9]{Weidmann1}.
Since $S$  is not required to be densely defined in the proof,
the assertion  is still true for Hermitian   operators.

\begin{lemma}\cite[Theorem 8.9]{Weidmann1}
Let $X$ be a complex Hilbert space, and let $K$ be a conjugation on $X$.
If $S$ is a $K$-real Hermitian  operator on $X$, then $d_+(S)=d_-(S)$.
\end{lemma}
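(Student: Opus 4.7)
The plan is to exhibit a conjugate-linear bijection between the two deficiency spaces $\mathcal{R}(S-iI)^\perp$ and $\mathcal{R}(S+iI)^\perp$, since the deficiency indices are by definition the dimensions of these subspaces. The natural candidate for the bijection is the conjugation $K$ itself. Once we show $K$ maps one subspace into the other (and vice versa), involutivity $K^2=I$ forces bijectivity, and the conjugate-linear bijection then transfers a Hamel basis on one side to a Hamel basis on the other, matching the complex dimensions.

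First, I would establish the useful identity $\langle y, Kz\rangle = \langle z, Ky\rangle$ for all $y,z \in X$, which follows immediately from substituting $K^2 y = y$ into the defining property $\langle Kx,Ku\rangle = \langle u,x\rangle$. The main computation then goes: fix $y \in \mathcal{R}(S-iI)^\perp$, i.e.\ $\langle y, (S-iI)u\rangle = 0$ for every $u \in \mathcal{D}(S)$. For arbitrary $x \in \mathcal{D}(S)$, use $K$-realness to note that $Kx \in \mathcal{D}(S)$ and $S(Kx) = K(Sx)$. Applying the orthogonality to $u = Kx$ and using the identity above yields
\begin{equation*}
0 = \langle y, S(Kx) - iKx\rangle = \langle y, K(Sx)\rangle + i\langle y, Kx\rangle = \langle Sx, Ky\rangle + i\langle x, Ky\rangle = \langle (S+iI)x, Ky\rangle,
\end{equation*}
so $Ky \in \mathcal{R}(S+iI)^\perp$. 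Hence $K$ carries $\mathcal{R}(S-iI)^\perp$ into $\mathcal{R}(S+iI)^\perp$, and the same argument with $-i$ in place of $i$ yields the reverse inclusion.

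Since $K^2 = I$, these two maps are mutually inverse, so $K$ restricts to a conjugate-linear bijection between the two deficiency spaces. To conclude $d_+(S) = d_-(S)$ I would verify that a conjugate-linear bijection preserves complex dimension: if $\{e_\alpha\}$ is a (Hamel) basis of $\mathcal{R}(S-iI)^\perp$, then $\{Ke_\alpha\}$ spans $\mathcal{R}(S+iI)^\perp$ (apply $K$ to any element of the latter and expand), and linear independence follows because $\sum c_\alpha K e_\alpha = 0$ implies, on applying $K$, $\sum \bar c_\alpha e_\alpha = 0$, whence all $c_\alpha = 0$.

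There is no serious obstacle here; the only point requiring care is choosing the correct inner-product convention so that the sign of $i$ flips properly in the central computation, and noting that the argument nowhere uses density of $\mathcal{D}(S)$, which is precisely why the lemma remains valid for Hermitian (and not only symmetric) $S$.
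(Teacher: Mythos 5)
Your proof is correct. The paper gives no proof of this lemma---it cites Weidmann's Theorem 8.9 and merely remarks that density of $\mathcal{D}(S)$ is not used---and your argument is exactly the standard one from that source, with the right adaptation for the non-densely-defined case: you work directly with the deficiency spaces $\mathcal{R}(S\mp iI)^\perp$ rather than with $\mathcal{N}(S^*\mp iI)$, so the adjoint (and hence density) never enters, and the isometric conjugate-linear bijection induced by $K$ gives equality of the dimensions.
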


The following is  a necessary condition for an operator $S$ to  be self-adjoint.

\begin{theorem}
Let $X$ be a complex Hilbert space, and let $S$ be a self-adjoint operator  on $X$.
Then there exists  a conjugation $K$ for which $S$ is $K$-real.
\end{theorem}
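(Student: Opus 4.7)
The plan is to invoke the spectral theorem for (possibly unbounded) self-adjoint operators, transport the obvious conjugation on an $L^2$-space back to $X$, and use the fact that the spectral multiplier is real-valued to conclude $K$-realness.

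First I would apply the multiplication-operator form of the spectral theorem: there exist a measure space $(\Om,\mu)$, a real-valued measurable function $\phi:\Om\to\R$, and a unitary $U:X\to L^2(\Om,\mu)$ such that $USU^{-1}=M_\phi$, the maximal multiplication operator by $\phi$, with $\mathcal{D}(M_\phi)=\{f\in L^2(\Om,\mu):\phi f\in L^2(\Om,\mu)\}$. On $L^2(\Om,\mu)$, pointwise complex conjugation $K_0 f:=\overline{f}$ is a canonical conjugation, as one checks directly from Definition 2.6.

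Next I would set $K:=U^{-1}K_0 U$ and verify the three axioms for a conjugation. Conjugate linearity and the involution property $K^2=I$ follow because $U$ is linear and $K_0$ is a conjugate-linear involution. For the inner-product axiom, using unitarity of $U$ and the identity $\langle K_0 f,K_0 g\rangle=\int\bar f g\,d\mu=\langle g,f\rangle$, one gets $\langle Kx,Ky\rangle=\langle K_0 Ux,K_0 Uy\rangle=\langle Uy,Ux\rangle=\langle y,x\rangle$.

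Finally I would verify that $S$ is $K$-real. If $x\in\mathcal{D}(S)$, then $Ux\in\mathcal{D}(M_\phi)$, i.e., $\phi\,Ux\in L^2(\Om,\mu)$. Since $\phi$ is real-valued, $\phi\,\overline{Ux}=\overline{\phi\,Ux}\in L^2(\Om,\mu)$, so $\overline{Ux}=UKx\in\mathcal{D}(M_\phi)$, which means $Kx\in\mathcal{D}(S)$; this gives $K\mathcal{D}(S)\subset\mathcal{D}(S)$. The commutation relation then follows from the same computation:
\begin{equation*}
SKx=U^{-1}M_\phi\,\overline{Ux}=U^{-1}\overline{\phi\,Ux}=U^{-1}K_0 U Sx=KSx.
\end{equation*}

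The only real substance is the appeal to the spectral theorem, which supplies both the measure-theoretic model and, crucially, the real-valuedness of the multiplier $\phi$; everything else is a routine transport-of-structure verification. The main (minor) obstacle is simply being careful that $K\mathcal{D}(S)\subset\mathcal{D}(S)$, which is where the reality of $\phi$ is used in an essential way.
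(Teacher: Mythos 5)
Your proposal is correct and follows essentially the same route as the paper: represent $S$ via the spectral theorem as multiplication by a real-valued function on an $L^2$-type space (the paper uses the direct-sum model $\oplus_{\af\in A}L^2(\R,\rho_\af)$ with the identity function, which is just a particular instance of your $(\Om,\mu,\phi)$), transport pointwise conjugation back through the unitary, and use the reality of the multiplier to check $K$-realness. The only cosmetic difference is the choice of spectral model; the verification steps are identical.
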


To prove Theorem 2.8, we introduce several  notations and  a lemma first.
The readers are referred to \cite{Weidmann1}.

Let $\{\rho_{\af}:\af\in A\}$ be a family of right continuous non-decreasing function defined on $\R$.
By $L^2(\R,\rho_\af)$ denote the set of all the square integrable functions with respect to $\rho_\af$ on $\R$.
It has been shown that $L^2(\R,\rho_\af)$ is a Hilbert space with inner
\begin{equation*}
 \langle f,g\rangle_{\af} =\int_{\R}\bar{g}(t)f(t) d\rho_\af(t).
\end{equation*}
By $\oplus_{\af\in A}L^2(\R,\rho_{\af})$ denote the orthogonal sum of the spaces $L^2(\R,\rho_\af)$.
Then $\oplus_{\af\in A}L^2(\R,\rho_{\af})$ is a Hilbert space with inner
\begin{equation*}
  \langle (x_\af), (y_\af)\rangle=\sum_{\af\in A}\langle x_\af,y_\af\rangle_\af,\quad (x_\af),(y_\af)\in \oplus_{\af\in A}L^2(\R,\rho_{\af}).
\end{equation*}
By $K_0$ denote the natural conjugation on $\oplus_{\af\in A}L^2(\R,\rho_{\af})$, i.e.,
\begin{equation}
  K_0(x_\af):=(\bar{x}_\af),\quad (x_\af)\in \oplus_{\af\in A}L^2(\R,\rho_{\af}).
\end{equation}

The following result is a combination of  Theorems 7.16-7.18 of \cite{Weidmann1}.

\begin{lemma}
Let $S$ be a self-adjoint  operator on $X$. Then there exists exactly one spectral  family $E$ for which
$T=\hat{E}(\rm id)$. Moreover, for this spectral  family $E$, there exists a family    $\{\rho_\af:\af\in A\}$
of right continuous non-decreasing functions ( the cardinality of $A$ is at most the dimension of $X$)
and a unitary operator $U:X\to \oplus_{\af\in A}L^2(\R,\rho_{\af})$ for which

\begin{equation}
 S=U^{-1}S_{\rm id}U,
\end{equation}
where $S_{\rm id}$ denotes the maximal operator of multiplication by the function $\rm id$ on $\oplus_{\af\in A}L^2(\R,\rho_{\af})$.
\end{lemma}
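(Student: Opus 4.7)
The plan is to establish Lemma 2.9 in two stages: first, build the spectral family $E$ intrinsically from $S$ so that $S=\hat{E}(\mathrm{id})$, together with its uniqueness; second, construct the multiplicative model $\oplus_{\af\in A}L^{2}(\R,\rh_{\af})$ by decomposing $X$ into $E$-cyclic subspaces.

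For the first stage, I would pass through the Cayley transform $V=(S-iI)(S+iI)^{-1}$, which is unitary because $S$ is self-adjoint and $\pm i$ lie in the resolvent set of $S$. Applying the spectral theorem for unitary operators (developed, for instance, via the commutative $C^{*}$-algebra generated by $V$ together with the Riesz representation theorem) yields a spectral family $F$ on the unit circle with $V=\int e^{i\st}\,dF(\st)$, and $F(\{1\})=0$ because $V$ has no fixed vector other than $0$. Transporting $F$ back through the bijection $\ld\mapsto(\ld-i)(\ld+i)^{-1}$ between $\R$ and the punctured unit circle then produces a spectral family $E$ on $\R$ satisfying $S=\int\ld\,dE(\ld)$ on the maximal domain $\{x\in X:\int\ld^{2}\,d\langle E(\ld)x,x\rangle<\infty\}$. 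Uniqueness of $E$ follows from the Stieltjes inversion formula: for every $x\in X$, the measure $\ld\mapsto\langle E(\ld)x,x\rangle$ is recovered from the boundary values of $\langle(S-z)^{-1}x,x\rangle=\int(\ld-z)^{-1}d\langle E(\ld)x,x\rangle$, and the resolvent is determined by $S$ alone.

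For the second stage, apply Zorn's lemma to the collection of families of mutually orthogonal $E$-cyclic subspaces of $X$ (a subspace is $E$-cyclic if it is generated by one vector under the bounded Borel functional calculus of $E$). A maximal such family $\{X_{\af}:\af\in A\}$ exhausts $X$, with $|A|\le\dim X$. For each $\af$, choose a generator $x_{\af}\in X_{\af}$ and define $\rh_{\af}(\ld):=\langle E(\ld)x_{\af},x_{\af}\rangle$; each $\rh_{\af}$ is right-continuous and non-decreasing. The functional calculus then furnishes a unitary $U_{\af}:X_{\af}\to L^{2}(\R,\rh_{\af})$ sending $\hat{E}(f)x_{\af}$ to $f$ for every bounded Borel $f$, and under this correspondence the restriction of $S$ to $X_{\af}$ is carried onto multiplication by $\mathrm{id}$ with its maximal domain in $L^{2}(\R,\rh_{\af})$. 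Assembling these into $U:=\oplus_{\af\in A}U_{\af}$ gives the required unitary, and $S=U^{-1}S_{\mathrm{id}}U$ holds because the intertwining already holds on each summand.

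The main obstacle is the first stage: producing the spectral family of a possibly unbounded self-adjoint operator directly from $S$. The Cayley transform reduces it to the spectral theorem for bounded unitary operators, but one must check that $F(\{1\})=0$ (so that the inverse change of variable $\st\mapsto\cot(\st/2)$ is defined $F$-almost everywhere) and that the integral $\hat{E}(\mathrm{id})$ recovers $S$ with exactly the domain $\mathcal{D}(S)$, rather than merely on a dense subspace; both points follow from self-adjointness but warrant careful bookkeeping. Once $E$ is in place, the cyclic decomposition and the identification of $S|_{X_{\af}}$ with multiplication by $\mathrm{id}$ are routine consequences of the functional calculus, and the cardinality estimate $|A|\le\dim X$ is immediate since distinct $X_{\af}$ are orthogonal.
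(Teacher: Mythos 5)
The paper does not actually prove this lemma: it is imported verbatim as ``a combination of Theorems 7.16--7.18 of \cite{Weidmann1}'' and used as a black box, so there is no internal proof to measure your argument against. Your outline is a correct, standard proof of the cited result, but it takes a different route from Weidmann's own. Weidmann does not use the Cayley transform; he constructs the measures $\rho_{\af}$ directly from the boundary behaviour of the resolvent $z\mapsto\langle x,(S-z)^{-1}x\rangle$ (Herglotz representation plus the Stieltjes inversion formula), obtains the multiplication model on $\oplus_{\af\in A}L^2(\R,\rho_{\af})$ first, and only afterwards defines the spectral family by $E(\ld)=U^{-1}\chi_{(-\infty,\ld]}U$ and proves its uniqueness. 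Your route builds $E$ first, via the spectral theorem for the unitary $V=(S-iI)(S+iI)^{-1}$, and then performs the cyclic decomposition; this presupposes the bounded/unitary spectral theorem as input, whereas Weidmann's argument is self-contained at the cost of the Herglotz machinery. The two delicate points you flag are genuine but close as you say: $F(\{1\})=0$ because $1$ is never an eigenvalue of the Cayley transform of a self-adjoint operator ($Vx=x$ with $x=(S+i)y$ forces $2iy=0$), and the domain identity $\mathcal{D}(\hat E(\mathrm{id}))=\mathcal{D}(S)$ follows since the inclusion $S\subset\hat E(\mathrm{id})$ between two self-adjoint operators forces equality. One detail worth making explicit, since the paper's proof of Theorem 2.8 depends on it: the unitary $U=\oplus_{\af}U_{\af}$ you construct (sending $\hat E(f)x_{\af}$ to $f$) is complex-linear, which is precisely the property invoked when $K=U^{-1}K_0U$ is shown to be a conjugation.
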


Now we give the proof of Theorem 2.8.

\begin{proof}
Let $S$ be a self-adjoint operator  on $X$. Then it follows from Lemma 2.9 that
there exists a  family $\{\rho_\af:\af\in A\}$
of right continuous non-decreasing functions (the cardinality of $A$ is at most the dimension of $X$)
and a unitary operator $U:X\to \oplus_{\af\in A}L^2(\R,\rho_{\af})$ such that (2.2) holds.
Based on the proof of  \cite[Theorems 7.16]{Weidmann1},
the unitary operator $U$ can be constructed  to be linear, i.e.,
\begin{equation*}
  U(ax+by)=aU(x)+bU(y), \quad x,y\in X,\;a,b\in \C.
\end{equation*}
Let
\begin{equation}
  K=U^{-1}K_0U,
\end{equation}
where   $K_0$ is  the natural conjugation on $\oplus_{\af\in A}L^2(\R,\rho_{\af})$ defined by (2.1).
Then it follows that
\begin{equation}
  K(ax+by)=\bar{a}K(x)+\bar{b}K(y)
\end{equation}
for all $x,y\in X$, $a,b\in\C$.
Further,  by the facts that $K_0$ is  a conjugation and $U$ is a unitary operator, it follows that
\begin{align}
&K^2=U^{-1}K_0UU^{-1}K_0U=I,\\\nonumber
&\langle Kx,Ky\rangle=\langle K_0Ux,K_0Uy\rangle=\langle Uy,Ux\rangle=\langle y,x\rangle,\quad x,y\in X.
\end{align}
These yield that  $K$ defined by (2.3) is a conjugation on $X$.

Further, it can be verified  that $S_{\rm id}$ is $K_0$-real, and $U\mathcal{D}(S)\subset \mathcal{D}(S_{\rm id})$.
Therefore it follows that  $K\mathcal{D}(S)\subset \mathcal{D}(S)$, and
\begin{equation*}
  SKx=U^{-1}S_{\rm id}K_0Ux=U^{-1}K_0S_{\rm id}Ux=KSx, \quad x\in \mathcal{D}(S).
\end{equation*}
This implies that $S$ is $K$-real.
The proof is complete.
\end{proof}

\section{Characterization  of formally self-adjoint difference expressions}

In this section  we pay attention to the characterization  of formally self-adjoint difference expressions.

First, we introduce some notations and concepts.
Let $X$ be a vector space over $\C$. A mapping $s:\;X\times X \to \C$ is called a sesquilinear form on $X$ if it follows
\begin{align*}
 & s[x,ay+bz]=\bar{a}s[x,y]+\bar{b}s[x,z], \\
 & s[ay+bz,x]=as[y,x]+bs[z,x],
\end{align*}
for all $x,y,z\in X$ and $a,b\in \C$.
Let $\I=\{t\}_{t=0}^{+\infty}$. For some $n\in\Z$, denote
\begin{equation*}
  l(\I,n)=\{x=\{x(t)\}_{t=-n}^{+\infty}:\;x(t)\in \C\}
\end{equation*}

Let $L$ be an arbitrary   difference expression of order $m$  defined on $l(\I,m)$:
\begin{align*}
    (Ly)(t):=\sum_{j=0}^kA_j(t+j)y(t+j)+\sum_{j=1}^sA_{-j}(t)y(t-j),
\end{align*}
where all  $A_j(t)$, $-s\le j\le k$, are complex valued functions;
$k$ and $s$ are non-negative integer satisfying $k+s=m$, and $A_{k}(t+k)\neq 0$, $A_{-s}(t)\ne 0$ for   $t\in \I$.
By $F$ denote the forward shift operator, i.e.,
\begin{equation*}
 Fy(t)=y(t+1).
\end{equation*}
and define   $F^{-1}y(t)=y(t-1)$,  $F^{j}=FF^{j-1}$.
Thus $L$ can be rewritten briefly as
\begin{align}
    Ly=\sum_{j=0}^kF^j(A_jy)+\sum_{j=1}^sA_{-j}F^{-j}y.
\end{align}

Now, we give the definition of the  formal adjoint of $L$ and try to establish the characterization  of its formal adjoint.

\begin{definition} Let $L$ be an $m$th-order  difference expression.
An $m$th-order difference expression, denoted by $L^+$, is said to be a formal adjoint of $L$,  if
there exists a sesquilinear form $s[\cdot, \cdot]$ on $l(\I,m)$ such that
\begin{equation*}
\bar{y}(t)(Lx)(t)-\overline{(L^+y)(t)}x(t)=\Delta s[x,y](t),\quad t\in \I.
\end{equation*}
Moreover, $L$ is said to be formally self-adjoint if $L=L^+$.
\end{definition}

For $n\ge 0$, let $L_n$ be a  difference expression   defined as the following:
\begin{align}
    (L_ny)(t):=\sum_{j=0}^nF^{j}(B_jy)(t),
\end{align}
where all $B_j(t)$, $0\le j\le n$, are complex-valued functions, $B_n(t)\ne 0$ on $\I$.
We get the following result on the formal adjoint of $L_n$.

\begin{lemma}
Let $L_n$ be an $n$th-order difference expression defined by $(3.2)$. Then

\begin{enumerate}
\item (Green Formula) For any $x,y\in l(\I,n)$,  $0\le k\leq r$,
\begin{align}
\sum_{t=k}^{r}\left[\bar{y}(t)L_nx(t)- \overline{(L_n^+y(t))}x(t)\right]=  s[x,y](r+1)- s[x,y](k),
\end{align}
where  $L_n^+$ is given by
\begin{align}
  (L_n^+y)(t)=\sum_{j=0}^{n}\overline{B}_{j}(t)F^{-j}y(t),
\end{align}
and $s[x,y]$ is a sesquilinear on $l(\I,n)$ with the form
\begin{align}
s[x,y](t)=\sum_{k=1}^n\sum_{j=0}^{k-1}\bar{y}(t-j-1)B_k(t+j)x(t+j).
\end{align}
\item  $L_n+L_n^+$ is a $2n$th-order formally self-adjoint difference expression.
\end{enumerate}
\end{lemma}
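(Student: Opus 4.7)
The plan is to establish the Green formula in part (1) by direct summation by parts, and then deduce the order and formal self-adjointness in part (2) as immediate consequences.

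For part (1), I would first expand the two operators in shift form:
\begin{equation*}
(L_n x)(t) = \sum_{k=0}^n B_k(t+k)\, x(t+k), \qquad \overline{(L_n^+ y)(t)} = \sum_{k=0}^n B_k(t)\, \bar y(t-k),
\end{equation*}
so that the $k=0$ contributions cancel in the pointwise difference $\bar y(t)(L_n x)(t) - \overline{(L_n^+ y)(t)}\, x(t)$. For each $k\ge 1$, I would establish the single-index telescoping identity
\begin{equation*}
\bar y(t)\, B_k(t+k)\, x(t+k) - B_k(t)\, \bar y(t-k)\, x(t) = \Delta s_k[x,y](t),
\end{equation*}
where $s_k[x,y](t)$ is the $k$-th inner sum in (3.5). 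The verification reduces to computing $s_k[x,y](t+1) - s_k[x,y](t)$ and shifting the summation index by one in the first term so that all interior contributions cancel, leaving exactly the two claimed endpoint terms. Summing over $k$ recovers $s[x,y]$ in the form (3.5), and summing the pointwise identity in $t$ from $k$ to $r$ telescopes to (3.3), simultaneously exhibiting $L_n^+$ as given in (3.4) as a formal adjoint of $L_n$.

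For part (2), the sum $L_n + L_n^+$ already fits the general form (3.1) with $k = s = n$: setting $A_j = B_j$ for $0 \le j \le n$ and $A_{-j} = \bar B_j$ for $1 \le j \le n$ yields $A_n(t+n) = B_n(t+n) \neq 0$ and $A_{-n}(t) = \bar B_n(t) \neq 0$, so the order is $k+s = 2n$. For formal self-adjointness, additivity $(L+M)^+ = L^+ + M^+$ is immediate from Definition 3.1 (sesquilinear forms add), so it suffices to prove the involutivity $(L_n^+)^+ = L_n$. I would obtain this by taking complex conjugates of (3.3) and swapping $x$ with $y$: the identity rearranges into
\begin{equation*}
\sum_{t=k}^r \left[\bar y(t)\,(L_n^+ x)(t) - \overline{(L_n y)(t)}\, x(t)\right] = -\overline{s[y,x](r+1)} + \overline{s[y,x](k)},
\end{equation*}
which matches Definition 3.1 with $L_n$ in the role of the formal adjoint of $L_n^+$. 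Hence $(L_n + L_n^+)^+ = L_n^+ + L_n = L_n + L_n^+$, so $L_n + L_n^+$ is formally self-adjoint.

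The main obstacle I anticipate is the bookkeeping in the telescoping identity for each $s_k$: three independent shift arguments $\bar y$, $B_k$, and $x$ must be tracked simultaneously, and one must check that the reindexing inside $s_k[x,y](t+1)$ eliminates all but the two endpoint contributions. Once this single combinatorial identity is established, everything else reduces to formal manipulation.
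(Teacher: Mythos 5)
Your overall strategy coincides with the paper's: the paper proves the pointwise identity $\bar y(t)L_nx(t)-\overline{(L_n^+y)(t)}x(t)=\Delta s[x,y](t)$ by induction on $n$, and its inductive step is precisely your per-$k$ telescoping identity for the top coefficient, so summing your identities over $k=1,\dots,n$ is the same computation unrolled. Part (2) is also handled the same way, via $(L_n^+)^+=L_n$; your conjugate-and-swap argument supplies the detail the paper leaves implicit.

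There is, however, one step that does not go through as you state it. You assert that
\begin{equation*}
\bar y(t)\,B_k(t+k)\,x(t+k)-B_k(t)\,\bar y(t-k)\,x(t)=\Delta s_k[x,y](t),
\qquad
s_k[x,y](t)=\sum_{j=0}^{k-1}\bar y(t-j-1)B_k(t+j)x(t+j),
\end{equation*}
with $s_k$ the $k$-th inner sum of (3.5), and that the reindexing $j\mapsto j+1$ in $s_k[x,y](t+1)$ kills all interior terms. It does not: after the shift, the $j$-th term of $s_k[x,y](t+1)$ is $\bar y(t-j+1)B_k(t+j)x(t+j)$, while the $j$-th term of $s_k[x,y](t)$ is $\bar y(t-j-1)B_k(t+j)x(t+j)$; these differ for every interior $j$ because the argument of $\bar y$ moves in the opposite direction from those of $B_k$ and $x$. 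Already for $k=2$ one finds that $\Delta s_2[x,y](t)$ contains the spurious middle term $\left[\bar y(t)-\bar y(t-2)\right]B_2(t+1)x(t+1)$ and the endpoint terms carry $\bar y(t-1)$ rather than $\bar y(t)$ and $\bar y(t-2)$, so the identity fails. The telescoping does close if the summand is paired as $\bar y(t+j-k)B_k(t+j)x(t+j)$ (equivalently, reverse the order: $\sum_{j=0}^{k-1}\bar y(t-j-1)B_k(t+k-1-j)x(t+k-1-j)$); with that form one gets exactly the two endpoint terms you want. The same index slip occurs in the paper's displayed formula (3.5) and in its inductive step, so you should state and prove the identity with the corrected $s_k$ rather than with (3.5) as printed. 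A very minor further point in part (2): when you write $L_n+L_n^+$ in the form (3.1), the zeroth coefficient is $A_0=B_0+\overline B_0$ rather than $B_0$; this does not affect the order count or the conclusion.
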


\begin{proof}
(1) Let $L_n^+$ and $s[x,y]$ be defined as (3.4) and (3.5), respectively. It suffices to  show
\begin{align}
\bar{y}(t)L_nx(t)- \overline{(L_n^+y)(t)}x(t)=\De s[x,y](t),\quad t\in \I.
\end{align}
We will proof (3.6)  by induction. For $n=1$, it follows that
\begin{align*}
\bar{y}(t)(L_1x)(t)- \overline{(L_1^+y)(t)}x(t)&=\bar{y}(t)B_1(t+1)x(t+1)-\bar{y}(t-1)B_1(t)x(t)\\
&= \De [\bar{y}(t-1)B_1(t)x(t)].
\end{align*}
This yields  (3.6) with $n=1$.
Assume that (3.6) hold for  $n-1$.
It can be easily verified
\begin{equation*}
 \bar{y}(t)B_n(t+n)x(t+n)-\bar{y}(t-n)B_n(t)x(t)=\De\sum_{j=0}^{n-1}[\bar{y}(t-j-1)B_n(t+j)x(t+j)].
\end{equation*}
Then it follows that
\begin{align*}
    &\bar{y}(t)(L_nx)(t)- \overline{(L_n^+y)(t)}x(t)\\
=&[\bar{y}(t)(L_{n-1}x)(t)-\overline{(L_{n-1}^+y)(t)}x(t)]+ \bar{y}(t)B_n(t+n)x(t+n)-\bar{y}(t-n)B_n(t)x(t)\\
=&\Delta\sum_{k=1}^{n-1}\sum_{j=0}^{k-1}\bar{y}(t-j-1)B_k(t+j)x(t+j)+ \De\sum_{j=0}^{n-1}[\bar{y}(t-j-1)B_n(t+j)x(t+j)]\\
=&\Delta\sum_{k=1}^{n}\sum_{j=0}^{k-1}\bar{y}(t-j-1)B_k(t+j)x(t+j).
\end{align*}
Thus (3.6), and consequently, assertion (1) is proved.

(2) Assertion (1) yields  that $(L_n^+)^+=L_n$.
So,  $(L_n+L_n^+)^+=L_n+L_n^+$, which implies  that $L_n+L_n^+$ is formally self-adjoint.
The proof is complete.
\end{proof}

Based on the above discussion,
we now establish a characterization of  formally self-adjoint difference expressions.

\begin{theorem}
Let   $\mathcal{L}$ be a formally self-adjoint difference expression of order $m$.
Then   $m$ is even, saying $m=2n$,  and   $\mathcal{L}$ has the the following form
\begin{align}
\mathcal{L}y=\sum_{j=0}^{n}F^{j}(A_jy)+\sum_{j=1}^{n}\overline{A}_jF^{-j}y,
\end{align}
where $A_j$, $j=1,\ldots,n$, are complex-valued functions, and $A_0$ is real-valued.
\end{theorem}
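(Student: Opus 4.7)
The plan is to compute the formal adjoint $L^+$ of a generic $m$th-order difference expression written in the form (3.1) and then impose $L=L^+$, extracting the structural constraints by matching coefficients shift by shift.

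First, I would split $L = L_1 + L_2$ with $L_1 y := \sum_{j=0}^{k} F^j(A_j y)$ and $L_2 y := \sum_{j=1}^{s} A_{-j} F^{-j} y$. Lemma 3.2(1) directly yields $L_1^+ y = \sum_{j=0}^{k} \overline{A}_j F^{-j} y$. For $L_2$, the key observation is that $L_2$ is itself the formal adjoint of the forward expression $M y := \sum_{j=1}^{s} F^j(\overline{A}_{-j} y)$, as follows by applying Lemma 3.2(1) to $M$; the involution $(M^+)^+ = M$ recorded in the proof of Lemma 3.2(2) then gives $L_2^+ = M = \sum_{j=1}^{s} F^j(\overline{A}_{-j} y)$. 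Since the defining identity of Definition 3.1 is additive in $L$ (the Green formulas for $L_1$ and $L_2$ simply add), I obtain $L^+ = L_1^+ + L_2^+ = \sum_{j=0}^{k} \overline{A}_j F^{-j} y + \sum_{j=1}^{s} F^j(\overline{A}_{-j} y)$.

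Next, I would equate $L = L^+$ coefficient by coefficient after rewriting each side as $\sum_{l \in \Z} c_l(t) y(t+l)$. The LHS has nonzero coefficients precisely for $-s \le l \le k$, with the extremes $A_k(t+k)$ and $A_{-s}(t)$ nonvanishing on $\I$; the RHS has nonzero coefficients precisely for $-k \le l \le s$, with extremes $\overline{A}_{-s}(t+s)$ and $\overline{A}_k(t)$ nonvanishing on $\I$. Matching these index ranges forces $k = s$; hence $m = k+s$ is even, and I set $n := k = s$. Comparing the $l = 0$ terms then gives $A_0 = \overline{A}_0$, so $A_0$ is real-valued, and comparing the coefficients of $y(t+l)$ (equivalently of $y(t-l)$) for $1 \le l \le n$ yields the single relation $A_{-l} = \overline{A}_l$. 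Substituting this identification into the original expression for $L$ produces exactly (3.7), with $A_n \neq 0$ inherited from $A_k \neq 0$.

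The main obstacle I foresee is the derivation of $L_2^+$: Lemma 3.2 as stated only gives adjoints of expressions in the forward form (3.2), so $L_2$ is not immediately covered. The cleanest route is via the involution $(M^+)^+ = M$ used above; alternatively, one can redo the telescoping induction in the proof of Lemma 3.2 starting from the backward monomials $A_{-j}(t) y(t-j)$ and read off the adjoint from the coefficient of $x(t)$ after an index shift $t \mapsto t+j$ in the sum. Once the formula for $L^+$ is in hand, the rest of the proof is purely coefficient bookkeeping.
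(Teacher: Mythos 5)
Your proposal is correct and follows essentially the same route as the paper: the paper likewise splits $L$ into its forward and backward parts, obtains the adjoint of the forward part from Lemma 3.2(1) and of the backward part from the involution $(L_n^+)^+=L_n$, and then forces $k=s$, $A_0=\overline{A}_0$, and $A_{-j}=\overline{A}_j$ by matching coefficients. The only difference is cosmetic: the paper's proof also verifies the converse (that every expression of the form (3.7) is formally self-adjoint), which is not part of the stated theorem and which you correctly omit.
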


\begin{proof} First, we shown $\mathcal{L}$ defined by (3.7) is self-adjoint.
Denote
\begin{equation*}
 L_ny=\sum_{j=1}^{n}F^j(A_jy)+\frac{1}{2}A_0y.
\end{equation*}
Then one has that  $\mathcal{L}=L_n+L_n^+$  by (1) of Lemma 3.2, and consequently,
$\mathcal{L}$ is formally self-adjoint by (2) of Lemma 3.2.

On the other hand, we proof $\mathcal{L}$ has form (3.7) if $\mathcal{L}$ is self-adjoint.
Let  $L$  be any $m$th-order difference expression with the form of  (3.1).
Then it can be written as $L=L_k+L_s$,
where
\begin{align*}
    (L_ky)(t)=\sum_{j=0}^kA_j(t+j)y(t+j),\quad L_sy)(t)=\sum_{j=1}^sA_{-j}(t)y(t-j).
\end{align*}
It follows from (1) of Lemma 3.2 that
\begin{align*}
   L^+y(t)=L_k^+y(t)+\hat{L}_s^+y(t)=\sum_{j=1}^s\overline{A}_{-j}(t+j)y(t+j)+ \sum_{j=0}^k\overline{A}_j(t)y(t-j).
\end{align*}
This yields that $L=L^+$  if and only if $s=k$, which implies that $m$ is even, saying $m=2n$; and
$A_j(t)\equiv\overline{A}_{-j}(t)$ for  $j=0,1,\ldots,n$,
which implies that  $A_0(t)$ is real-valued. The proof is complete.
\end{proof}

\begin{theorem}
For any $n$th-order forward difference expression $L_n$,
$L_nL_n^+$ is a   formally self-adjoint difference expression of order $2n$.
\end{theorem}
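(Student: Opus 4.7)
The plan is to reduce the statement to a product rule for formal adjoints, combined with the involutive identity $(L_n^+)^+ = L_n$ established within the proof of Lemma 3.2(2). Once these two ingredients are available, the conclusion is immediate from
\[(L_n L_n^+)^+ = (L_n^+)^+\, L_n^+ = L_n L_n^+,\]
so that $L_n L_n^+$ coincides with its own formal adjoint in the sense of Definition 3.1.

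To prove the auxiliary product rule $(LM)^+ = M^+ L^+$ for two composable difference expressions $L$ and $M$, I would start from $\bar{y}(t)(LMx)(t) - \overline{(M^+L^+y)(t)}\,x(t)$ and add and subtract the term $\overline{(L^+y)(t)}\,(Mx)(t)$ to split it into two brackets. The first bracket $\bar{y}(t)\bigl(L(Mx)\bigr)(t) - \overline{(L^+y)(t)}\,(Mx)(t)$ becomes $\Delta s_L[Mx,y](t)$ upon applying Green's formula (Lemma 3.2(1)) to $L$ with arguments $(Mx,y)$; the second bracket $\overline{(L^+y)(t)}\,(Mx)(t) - \overline{(M^+L^+y)(t)}\,x(t)$ becomes $\Delta s_M[x,L^+y](t)$ upon applying Green's formula to $M$ with arguments $(x,L^+y)$. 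Summing yields $\Delta s'[x,y](t)$ with
\[s'[x,y](t) := s_L[Mx,y](t) + s_M[x,L^+y](t),\]
and $s'$ is sesquilinear on the relevant sequence space because both $M$ and $L^+$ are $\C$-linear on sequences. By Definition 3.1 this proves the product rule; specialising to $L = L_n$ and $M = L_n^+$ then delivers the required formal self-adjointness of $L_n L_n^+$.

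For the order claim I would expand
\[(L_n L_n^+ y)(t) = \sum_{i=0}^{n}\sum_{j=0}^{n} B_i(t+i)\,\overline{B_j(t+i)}\, y(t+i-j),\]
note that the shift index $k = i - j$ ranges over $\{-n,\ldots,n\}$, and read off that the extreme terms $y(t+n)$ and $y(t-n)$ carry coefficients $B_n(t+n)\,\overline{B_0(t+n)}$ and $B_0(t)\,\overline{B_n(t)}$; hence $L_n L_n^+$ has order $2n$ in the sense of (3.1). The main obstacle is the careful bookkeeping needed to verify that $s'$ is sesquilinear: one must check that composing the sesquilinear forms $s_L$ and $s_M$ from Lemma 3.2 with the $\C$-linear operators $x \mapsto Mx$ and $y \mapsto L^+y$, each inserted in the appropriate slot, preserves linearity in $x$ and antilinearity in $y$. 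Once that is confirmed, the theorem drops out directly and the remaining work is routine index manipulation.
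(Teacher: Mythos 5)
Your argument is correct, but it takes a genuinely different route from the paper. The paper proves Theorem 3.4 by brute-force expansion: it computes $L_nL_n^+$ explicitly, obtains the coefficients $D_j(t)=\sum_{k=j}^{n}B_k(t+k-j)\overline{B}_{k-j}(t+k-j)$, observes that $D_0(t)=\sum_{k=0}^{n}|B_k(t+k)|^2$ is real, and then invokes the structural characterization of Theorem 3.3 (an expression of the form (3.7) with real $A_0$ is formally self-adjoint). You instead prove an abstract anti-multiplicativity rule $(LM)^+=M^+L^+$ by telescoping two Green's formulas, and combine it with the involution $(L_n^+)^+=L_n$ from Lemma 3.2(2) to get $(L_nL_n^+)^+=L_nL_n^+$ directly. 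Both work; yours is more conceptual and mirrors the operator identity $(AB)^*=B^*A^*$, while the paper's computation has the side benefit of producing the explicit formula (3.8) for $D_j$, which is then essential in the proof of Theorem 3.5 (the factorization $\mathcal{L}=L_nL_n^++L_0$), so your route would still leave that computation to be done later. One small point you should make explicit: Lemma 3.2(1) states Green's formula only for \emph{forward} expressions of the form (3.2), whereas your second bracket applies it to the backward expression $M=L_n^+$. This is legitimate — conjugating the forward identity $\bar y\,L_nx-\overline{L_n^+y}\,x=\Delta s[x,y]$ and swapping $x$ and $y$ yields $\bar y\,L_n^+x-\overline{L_ny}\,x=\Delta\bigl(-\overline{s[y,x]}\bigr)$ with $-\overline{s[y,x]}$ again sesquilinear, which is exactly the Green's formula for $L_n^+$ with adjoint $L_n$ — and the paper itself relies on this implicitly when asserting $(L_n^+)^+=L_n$, but it deserves a sentence. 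Finally, note that both your proof and the paper's establish that the order is \emph{at most} $2n$ together with leading coefficients $B_n(t+n)\overline{B_0(t+n)}$ and $B_0(t)\overline{B_n(t)}$; exactness of the order $2n$ requires $B_0\ne0$, a hypothesis neither you nor the paper imposes, so this is a shared (minor) gap rather than a defect of your approach.
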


\begin{proof}
Let  $L_n$ be defined as (3.2). Then  it can be verified that
\begin{align*}
(L_nL_n^+)y=L_n(L_n^+y)=\sum_{j=0}^{n}F^{j}(D_jy)+\sum_{j=1}^{n}\overline{D}_jF^{-j}y,
\end{align*}
where
\begin{equation}
  D_j(t)=\sum_{k=j}^{n}B_k(t+k-j)\overline{B}_{k-j}(t+k-j),\quad j=0,1,\ldots,n,\quad t\in\I.
\end{equation}
It is clear  $D_0(t)=\sum_{k=0}^{n}|B_k(t+k)|^2$  is real-valued on $\I$.
So, $L_nL_n^+$ is formally self-adjoint by Theorem 3.3.
The proof is complete.
\end{proof}

Now we can give another characterization of  formally self-adjoint difference expression $\mathcal{L}$ by using $L_n$.

\begin{theorem}
For any  formally self-adjoint difference expression $\mathcal{L}$ with order $2n$,
there exist two difference expressions  $L_n$ and $L_0$ with the form of $(3.3)$ such that
\begin{equation*}
  \mathcal{L}=L_nL_n^++L_0.
\end{equation*}
\end{theorem}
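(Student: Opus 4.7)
By Theorem 3.3 we may assume $\mathcal{L}$ is already in the canonical form
$$\mathcal{L}y = \sum_{j=0}^n F^j(A_j y) + \sum_{j=1}^n \overline{A}_j F^{-j} y,$$
with $A_0$ real-valued and $A_n$ nowhere vanishing on $\I$. The plan is to seek $L_n y = \sum_{j=0}^n F^j(B_j y)$ of the form (3.2) so that $\mathcal{L} - L_n L_n^+$ has order zero, in which case it automatically reduces to a real multiplication operator that serves as $L_0$. To fix the single degree of freedom I normalize by taking $B_0 \equiv 1$. Then, by Theorem 3.4 and the formula (3.8) for the coefficients $D_j$ of $L_n L_n^+$, the decomposition problem is equivalent to the system
$$D_j(t) = A_j(t), \qquad j = 1, \ldots, n, \; t \in \I. \quad (*)$$

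The top equation $j = n$ immediately gives $B_n(t) = A_n(t)$, which is nonzero on $\I$ as required by (3.2). For $1 \le j \le n-1$ the system $(*)$ is nonlinear and each unknown appears in many equations, so direct recursion on $j$ is circular. The key technical step is to reindex by the maximal time shift appearing in each equation: set $t' := t + (n-j)$ and $k := n-j$. A short computation in (3.8) converts $(*)$ into the equivalent form
$$B_n(t')\overline{B}_k(t') + \sum_{l=1}^{k} B_{n-l}(t'-l)\overline{B}_{k-l}(t'-l) = A_{n-k}(t'-k),$$
valid for $0 \le k \le n-1$ and $t' \ge k$. Apart from the leading term on the left, every term depends only on values of $B_i$ at times strictly less than $t'$, and since $B_n(t') = A_n(t') \neq 0$, we can solve for $\overline{B}_k(t')$ explicitly in terms of earlier data.

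I would then construct the $B_j$'s by induction on $t'$: at each level $t'$, prescribe the ``free'' values $B_k(t')$ for $t' + 1 \le k \le n-1$ arbitrarily (these are empty when $t' \ge n-1$ and, over all levels, constitute $n(n-1)/2$ free complex parameters), and use the recursion to determine $B_n(t') = A_n(t')$ and then $B_k(t')$ for $1 \le k \le \min(t', n-1)$. This produces all $B_0, \ldots, B_n$ as functions on $\I$ with $B_n$ never zero, yielding a legitimate $L_n$ in the form (3.2). Finally, since $A_0$ is real-valued and $D_0(t) = \sum_{k=0}^n |B_k(t+k)|^2$ is real by construction, the remainder $L_0 y := (A_0(t) - D_0(t))y$ is a real zero-order difference expression, and by construction $\mathcal{L} = L_n L_n^+ + L_0$.

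The main obstacle is the nonlinear, entangled nature of $(*)$ in the original variables, which blocks any naive induction on $j$; the reindexing $t' = t + (n-j)$ is the crucial move, triangulating the system along the time axis so that each layer can be solved from data at earlier layers.
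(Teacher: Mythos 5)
Your proposal is correct and follows essentially the same route as the paper: normalize $B_0\equiv 1$, read off $B_n=A_n$ from the top equation, solve the system (3.10) recursively with $n(n-1)/2$ freely prescribed initial values $B_k(t')$ for $0\le t'\le k-1$, and absorb the real remainder $A_0-D_0$ into a zero-order term $L_0$. Your reindexing $t'=t+(n-j)$ merely makes explicit the triangular structure that the paper summarizes with ``can be determined uniquely by equation (3.10)''.
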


\begin{proof}
Let $\mathcal{L}$ be formally self-adjoint difference expression defined as (3.7).
By Theorem 3.4 and (3.8), it suffices to show there exists a set of complex-valued functions  $B_j(t)$, $0\le j\le n$, and
a real-valued function $C(t)$ such that
\begin{align}
    (L_ny)(t)=\sum_{j=0}^nF^{j}(B_jy)(t),\quad (L_0y)(t)=C(t)y(t)
\end{align}
and
\begin{align}
&\sum_{k=j}^{n} B_k(t+k-j)\overline{B}_{k-j}(t+k-j)\equiv A_j(t),\quad j=1,2,\ldots,n,\\
&\sum_{k=0}^{n} B_k(t+k-j)\overline{B}_{k-j}(t+k-j)+C(t)\equiv A_0(t).
\end{align}

By taking  $B_0(t)\equiv 1$ on $\I$, it can be easily verified that $B_n(t)= A_n(t)$ on $\I$ yields (3.10) with $j=n$.
In addition, by giving the following initial values
\begin{equation*}
  \begin{array}{lllll}
     B_1(0), &  &  &  &  \\
     B_2(0), &B_2(1),  &  &  &  \\
         &\cdots  &   &  & \\
     B_{n-1}(0),&B_{n-1}(1),&\cdots& B_{n-1}(n-2),
   \end{array}
\end{equation*}
the values  of $B_j(t)$, $t\ge j$, $1\le j\le n-1$,  can be determined uniquely by equation (3.10).
At last, take
\begin{equation*}
 C(t)=A_0(t)-\sum_{k=0}^{n} |B_k(t+k)|^2,\quad t\in \I.
\end{equation*}
Then  (3.11) is satisfied. The proof is complete.
\end{proof}

\section{Proof of Theorem 1.1}

Let $l(\I,n)$ be defined as that in Section 3,
and $\mathcal{L}$ be a formally self-adjoint difference expression of
order $2n$ defined  on $l(\I,n)$. Denote
\begin{align*}
l_{w}^2:=\left\{x\in l(\I,n):\;\sum_{t=0}^{+\infty}w(t)|x(t)|^2<+\infty\right\}.
\end{align*}
and
\begin{align*}
\langle x,y\rangle_{w}:=\sum_{t=0}^{+\infty}\bar{y}(t)w(t)x(t), \quad \|x\|_{w}:=\langle x,x\rangle_{w}^{1/2}.
\end{align*}
For any $x,y\in l_{w}^2$, we say $x=y$ if $\|x\|=\|y\|$.
Then $l_{w}^2$ is a Hilbert space with the inner product $\langle \cdot,\cdot\rangle_{w}$ (cf. \cite[Lemma 2.5]{Shi1}).

Denote
\begin{align*}
l^2_{w,0}(\I):=\left\{y\in l^2_{w}(\I):\; y(t)=0, \; -n\le t\le n-1\; {\rm and} \; t\ge k \;{\rm for\; some}\;k\in \I\right\}.
\end{align*}
and
\begin{eqnarray*}
&&T:=\{(x,f)\in (l^2_w(\I))^2: \;(\mathcal{L}x)(t)=w(t)f(t)\; {\rm on}\,\I\},\\
&&T_{0}:=\{(x,f)\in T:\; x\in l^2_{w,0}(\I)\}.
\end{eqnarray*}
$T$,  $T_{0}$, and  the closure of $T_{0}$, denoted by $\overline{T}_{0}$,  are called the maximal,
the  pre-minimal, and the  minimal linear relations corresponding to   $\mathcal{L}$, separately.
Thus, $d_{\ld}(\mathcal{L})$, given in Section 1, just refers to $d_{\ld}(\overline{T}_{0})$,
which is equal to $d_{\ld}(T_{0})$.

It follows from Theorem 3.5  there exist two difference expressions $L_n$ and $L_0$ defined as (3.9)
such that $\mathcal{L}=L_nL_n^++L_0$.
Similarly, denote
\begin{eqnarray*}
&&H:=\{(x,f)\in (l^2_w(\I))^2:\; (L_nL_n^+x)(t)=w(t)f(t)\; {\rm on}\;\I\},\\
&&H_{0}:=\{(x,f)\in H: x\in l^2_{w,0}(\I)\}.
\end{eqnarray*}

\begin{lemma}
Both $T_0$ and $H_{0}$ are Hermitian linear relations.
\end{lemma}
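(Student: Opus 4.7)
The plan is to verify the Hermitian condition directly from Definition 3.1 (formal self-adjointness) combined with the support conditions built into $l^2_{w,0}(\I)$. Both $T_0$ and $H_0$ are linear relations by construction, so what remains is to show that for any $(x,f),(y,g) \in T_0$ we have $\langle f,y\rangle_w = \langle x,g\rangle_w$, and analogously for $H_0$.

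First I would apply Definition 3.1 to $\mathcal{L}$: since $\mathcal{L}$ is formally self-adjoint, there exists a sesquilinear form $s[\cdot,\cdot]$ on $l(\I,2n)$ with
\begin{equation*}
\bar y(t)(\mathcal{L}x)(t)-\overline{(\mathcal{L}y)(t)}\,x(t)=\Delta s[x,y](t),\quad t\in\I.
\end{equation*}
Summing this identity from $t=0$ to $t=r$ yields the telescoping Green identity
\begin{equation*}
\sum_{t=0}^{r}\bigl[\bar y(t)(\mathcal{L}x)(t)-\overline{(\mathcal{L}y)(t)}\,x(t)\bigr]=s[x,y](r+1)-s[x,y](0).
\end{equation*}
The key observation is that the form $s[x,y](t)$ is a finite linear combination of products of shifted values of $x$ and $y$ in the window $[t-n,t+n-1]$ (this is visible in the canonical case (3.5) and transfers to $\mathcal{L}=L_n+L_n^+$). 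Since every $x,y\in l^2_{w,0}(\I)$ vanishes on $\{-n,\dots,n-1\}$, we get $s[x,y](0)=0$. Since such $x,y$ also have finite support, for $r$ sufficiently large $s[x,y](r+1)=0$ as well.

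Passing $r\to\infty$ therefore gives $\sum_{t=0}^{\infty}\bar y(t)(\mathcal{L}x)(t)=\sum_{t=0}^{\infty}\overline{(\mathcal{L}y)(t)}\,x(t)$. Substituting $(\mathcal{L}x)(t)=w(t)f(t)$ and $(\mathcal{L}y)(t)=w(t)g(t)$ and using that $w$ is real-valued, the left side equals $\langle f,y\rangle_w$ and the right side equals $\langle x,g\rangle_w$. Hence $(x,f)\in T_0^*$, and $T_0\subset T_0^*$, i.e., $T_0$ is Hermitian.

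For $H_0$ the argument is identical once we replace $\mathcal{L}$ by $L_nL_n^+$: Theorem 3.4 guarantees that $L_nL_n^+$ is itself a formally self-adjoint difference expression of order $2n$, so Definition 3.1 supplies the analogous sesquilinear form, and the same vanishing-boundary argument gives $\langle f,y\rangle_w=\langle x,g\rangle_w$ on $H_0$. The only step requiring any genuine care is the verification that the boundary contribution $s[x,y](0)$ vanishes; this is where the particular choice to force $x(t)=0$ on the initial block $-n\le t\le n-1$ (rather than just $0\le t\le n-1$) is used, since $s[x,y](0)$ involves shifts reaching back to $t=-n$.
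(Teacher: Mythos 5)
Your proof is correct and follows essentially the same route as the paper: the paper's one-line argument also rests on the Green formula (3.3) together with the formal self-adjointness of $\mathcal{L}$ and $L_nL_n^+$ and the vanishing of the boundary terms forced by the support condition defining $l^2_{w,0}(\I)$. You have simply written out in full the telescoping and boundary-term verification that the paper leaves implicit.
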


\begin{proof}
Since both $\mathcal{L}$ and $L_nL_n^+$ are formally self-adjoint,
it follows that both $T_0$ and $H_{0}$ are Hermitian linear relations by using (3.3).
\end{proof}

\begin{theorem} For  any formally self-adjoint difference expression $\mathcal{L}$,
\begin{equation}
 d_{+}(\mathcal{L})=d_{-}(\mathcal{L}).
\end{equation}
\end{theorem}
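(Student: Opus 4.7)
The plan is to exploit the factorization $\mathcal{L} = L_n L_n^+ + L_0$ from Theorem 3.5, where $L_0 y(t) = C(t) y(t)$ with $C$ real-valued, and to combine the positivity of the leading term $L_n L_n^+$ with the $K$-real framework of Section 2. First I would show that the Hermitian relation $H_0$ is non-negative. For any $(x,f)\in H_0$, the element $x \in l^2_{w,0}$ has compact support in $\I$, so the boundary terms in the Green formula of Lemma 3.2(1), applied with $y=x$ to the sequence $L_n^+ x$, vanish upon summation over $\I$. Using $L_nL_n^+ x = wf$ this gives
\[
\langle f,x\rangle_w = \sum_{t\in\I}\overline{x(t)}\,(L_n L_n^+ x)(t) = \sum_{t\in\I}|L_n^+x(t)|^2 \ge 0,
\]
so $H_0$ (and hence $\overline{H_0}$) is Hermitian and bounded below by $0$.

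Next, Lemma 2.5 applied to $\overline{H_0}$ yields both $d_+(\overline{H_0}) = d_-(\overline{H_0})$ and the existence of a self-adjoint extension $\tilde H$ of $\overline{H_0}$ in $(l^2_w)^2$. To transfer this to $\overline{T_0}$, introduce the Hermitian multiplication relation $M$ in $l^2_w$ defined by $(x,g)\in M \Leftrightarrow C(t) x(t) = w(t) g(t)$; a direct check using the decomposition of $\mathcal{L}$ gives $T_0 = H_0 + M$ (sum of relations in the sense of Section 2), and consequently $\overline{T_0}$ embeds in $\tilde H + M$. I would apply Theorem 2.8 to the self-adjoint operator $\tilde H_s$ to obtain a conjugation $K$ on $\tilde H(0)^\perp$ making $\tilde H_s$ a $K$-real operator. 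If $K$ can be chosen so that it also commutes with the operator part of $M$, then $(\tilde H + M)_s$ is $K$-real Hermitian, and Lemma 2.7 gives $d_+((\tilde H + M)_s) = d_-((\tilde H + M)_s)$. By Lemma 2.3 the same equality holds for $\tilde H + M$; Lemma 2.2 then produces a self-adjoint extension of $\tilde H + M$, which automatically extends $\overline{T_0}$, and a final application of Lemma 2.2 gives $d_+(\overline{T_0}) = d_-(\overline{T_0})$, i.e., $d_+(\mathcal{L}) = d_-(\mathcal{L})$.

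The main obstacle, in my view, is precisely the compatibility demand in the last step: the conjugation $K$ supplied by Theorem 2.8 is built from a spectral representation $U$ of $\tilde H_s$ via $K = U^{-1} K_0 U$, while $M$ is diagonal in the canonical $l^2_w$ basis. Commutativity of $K$ and $M$ forces the spectral representation to simultaneously diagonalise both operators, which is equivalent to $\tilde H_s$ strongly commuting with $M$. When $C/w$ is bounded this compatibility can be circumvented entirely by a Kato--Rellich bounded self-adjoint perturbation argument applied to $\tilde H$ and $M$, directly yielding self-adjointness of $\tilde H + M$. For general (possibly unbounded) $C/w$, the technical core of the proof is either to produce a self-adjoint extension $\tilde H$ of $\overline{H_0}$ that strongly commutes with $M$, or to construct the required conjugation by other means --- for instance, as a suitable modification of pointwise complex conjugation on $l^2_w$ by phase factors adapted to the coefficients $A_j$ of $\mathcal{L}$. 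This is the step that genuinely uses the finite-difference (rather than differential) character of $\mathcal{L}$, and is where one expects the distinction from the Mcleod-type counterexamples to appear.
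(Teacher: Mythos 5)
Your strategy is the paper's strategy almost verbatim: factor $\mathcal{L}=L_nL_n^++L_0$ via Theorem 3.5, use the Green formula to show $H_0\ge 0$, invoke Lemma 2.5 to obtain a self-adjoint extension $H_1$ of $H_0$, apply Theorem 2.8 to its operator part $S_H$ to get a conjugation $K$, and then propagate $K$-reality through the real multiplication perturbation coming from $L_0$ so that Lemmas 2.7 and 2.2 finish the argument. The first half (non-negativity of $H_0$, equality of its deficiency indices, existence of $H_1$) is carried out correctly. But as written the proposal does not prove the theorem: you explicitly leave open the one step on which everything hinges, namely that the conjugation $K=U^{-1}K_0U$ produced by Theorem 2.8 from a spectral representation of $S_H$ can be arranged to commute with the multiplication operator $S_0\colon x\mapsto (C/w)x$. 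Without that, $S_H+S_0$ is not known to be $K$-real, Lemma 2.7 does not apply, and no conclusion about $d_\pm(\overline{T}_0)$ follows. The obstruction is genuine: an arbitrary conjugation need not commute with multiplication by a real function (already on $\C^2$, $K(x_1,x_2)=(\bar x_2,\bar x_1)$ fails to commute with multiplication by $(1,2)$), and neither of your suggested repairs is executed --- the Kato--Rellich route needs $C/w$ bounded, which is not guaranteed since no growth restrictions are placed on the coefficients $A_j$, and you do not construct an extension $\tilde H$ strongly commuting with $M$. So this is a correct reduction of the theorem to an unproved compatibility claim, not a proof.

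For what it is worth, you have located exactly the load-bearing step: the paper's own proof disposes of it in one line by asserting that $S_0$ is $K$-real ``since both $C(t)$ and $w(t)$ are real-valued.'' That inference is immediate for the pointwise conjugation $x\mapsto\bar x$ on $l^2_w$, but it is not justified for the abstract $K$ supplied by Theorem 2.8, so your identification of this as the delicate point is sound; a complete argument must either build the conjugation concretely (e.g.\ adapted to the coefficients of $L_n$, as you suggest) or choose $H_1$ compatibly with $M$. Two smaller cautions: the decomposition $T_0=H_0+M$ of relations requires care at points where $w(t)=0$ (the paper only uses the inclusion $S_T\subset S_H+S_0$ of operator parts, which is what your argument actually needs), and the final appeal to Lemma 2.2 requires $\overline{T}_0$ to be closed and Hermitian, which is Lemma 4.1 together with passing to the closure.
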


\begin{proof}
Based on the notations given above, (4.1) is equivalent to .
\begin{equation*}
 d_{+}(T_{0})=d_{-}(T_{0}).
\end{equation*}

For any $(x,f)\in H_0$, it follows  from (3.3) that
\begin{equation*}
  \langle x,f\rangle_w=\sum_{t\in \I} x(t)\overline{L_nL_n^+x(t)}=\sum_{t\in \I}  L_n^+x(t)\overline{L_n^+x(t)} \ge 0.
\end{equation*}
This yields that $H_0$ is Hermitian and bounded from below with lower bound $0$.
Hence, it follows from Lemma 2.5 that $H_0$ has self-adjoint extensions.
Let $H_1$ be a self-adjoint extension of $H_0$. By  $H_{1,s}$ denote the operator part of $H_1$.
It follows from Lemma 2.1 that $H_{1,s}$ is  self-adjoint in $(H_1^{\bot}(0))^2$,
and there exists uniquely a self-adjoint  operator, denoted by $S_{H}$ in $H_1^{\bot}(0)$
such that $G(S_{H})=H_{1,s}$.
Then by Theorem 2.8, there exists a conjugate $K$ on $H_{1}^{\bot}(0)$ such that $S_{H}$ is $K$-real.

Corresponding to difference expression $L_0$, we define
\begin{align*}
  &\mathcal{D}(S_0)=\{x\in l^2_w(\I)\cap H_{1}^{\bot}(0): \exists f\in l^2_w(\I)\;s.t.\; C(t)x(t)=w(t)f(t)\; {\rm on}\; \I \}, \\
  &S_0x=f.
\end{align*}
It can be easily verified that  $S_0$ is a well-defined operator on $l^2_w(\I)$.
Moreover,  $S_0$ is $K$-real.
In fact, for any $x\in \mathcal{D}(S_0)\cap H_{1}^{\bot}(0)$,
there exists uniquely  $f\in l^2_{w}(\I)$
such that $C(t)x(t)=w(t)f(t)$ on $\I$.
Since both $C(t)$ and $w(t)$ are real-valued,
it follows that
\begin{equation*}
  C(t)(Kx)(t)=w(t)(Kf)(t),\quad t\in \I.
\end{equation*}
This implies that $K\mathcal{D}(S_0)\subset \mathcal{D}(S_0)$ and  $S_0Kx=Kf=KS_0x$.
Thus $S_0$ is $K$-real, and consequently, $S_{H}+S_0$ is $K$-real.
So, by Lemma 2.7 one has that
\begin{equation*}
 d_+(S_{H}+S_0)=d_-(S_{H}+S_0),
\end{equation*}
 and consequently,
$S_{H}+S_0$ has self-adjoint extensions by Lemma 2.2.

On the other hand, it is clear $\overline{T}_0$ is closed and Hermitian.
Let  $\overline{T}_{0,s}$ be the operator part of $\overline{T}_{0}$ and
operator $S_T$ be the operator which satisfies  $G(S_T)=\overline{T}_{0,s}$.
Then one has   $S_{T}\subset S_{H}+S_0$.
Thus, all the self-adjoint extensions of  $S_{H}+S_0$ are still the self-adjoint extensions of  $S_{T}$.
Again by Lemma 2.2, one has $d_+(S_{T})=d_-(S_{T})$. This, together with $\mathcal{R}(\overline{T}_{0})=\mathcal{R}(S_{T})$,
yields  (4.1). The proof is complete.

\end{proof}

\noindent



\bibliographystyle{amsplain}

\end{document}